\newtheorem{theorem}{Theorem}
\newtheorem{acknowledgement}[theorem]{Acknowledgement}
\newtheorem{lemma}[theorem]{Lemma}
\newtheorem{problem}[theorem]{Problem}
\newenvironment{proof}[1][Proof]{\noindent\textbf{#1.} }{\ \rule{0.5em}{0.5em}}
\begin{document}

\title{Splitting multidimensional necklaces and measurable colorings of
Euclidean spaces}
\author{Jaros\l aw Grytczuk, Wojciech Lubawski \\
%EndAName
Theoretical Computer Science Department, Faculty of Mathematics and Computer
Science, Jagiellonian University, 30-348 Krak\'{o}w, Poland;
grytczuk@tcs.uj.edu.pl, lubawski@tcs.uj.edu.pl}
\maketitle

\begin{abstract}
A necklace splitting theorem of Goldberg and West \cite{GoldbergWest}
asserts that any $k$-colored (continuous) necklace can be fairly split using
at most $k$ cuts. Motivated by the problem of Erd\H{o}s on strongly
nonrepetitive sequences, Alon et al. \cite{AlonGLM} proved that there is a $%
(t+3)$-coloring of the real line in which no necklace has a fair splitting
using at most $t$ cuts. We generalize this result for higher dimensional
spaces. More specifically, we prove that there is $k$-coloring of $\mathbb{R}%
^{d}$ such that no cube has a fair splitting of size $t$ (using at most $t$
hyperplanes orthogonal to each of the axes), provided $k\geq
(t+4)^{d}-(t+3)^{d}+(t+2)^{d}-2^{d}+d(t+2)+3$. We also consider a discrete
variant of the multidimensional necklace splitting problem in the spirit of
the theorem of de Longueville and \v{Z}ivaljevi\'{c} \cite{LonguevilleZ}.
The question how many axes aligned hyperplanes are needed for a fair
splitting of a $d$-dimensional $k$-colored cube remains open.
\end{abstract}

\section{Introduction}

In this paper we investigate some questions connected to the necklace
splitting problem. Let $c:\mathbb{R}\rightarrow \{1,2,\ldots ,k\}$ be a $k$%
-coloring of the real line. We assume that $c$ is a\emph{\ measurable}
coloring, that is, the set $c^{-1}(i)$ of all points in color $i$ is
Lebesgue measurable for every $i\in \{1,2,\ldots ,k\}$. A \emph{splitting}
of size $t$ of an interval $[a,b]$ is a sequence of points $%
a=y_{0}<y_{1}<\ldots <y_{t}<y_{t+1}=b$. A splitting is said to be \emph{fair}
if it is possible to partition the resulting collection of intervals $%
F=\{[y_{i},y_{i+1}]:0\leq i\leq t\}$ into two disjoint subcollections $F_{1}$
and $F_{2}$, each capturing exactly half of the total measure of every
color. The partition $F=F_{1}\cup F_{2}$ will be called a \emph{fair}
partition of $F$.

Goldberg and West \cite{GoldbergWest} proved that every $k$-colored interval
has a splitting of size at most $k$ (see also \cite{AlonWest} for a short
proof using the Borsuk-Ulam theorem, and \cite{Matousek} for other
applications of the Borsuk-Ulam theorem in combinatorics). This result is
clearly the best possible, as can be seen in a necklace where colors occupy
consecutively full intervals.

In \cite{AlonGLM} we considered colorings of $\mathbb{R}$ such that no
interval has a splitting of bounded size.

\begin{theorem}
\label{AGLM}\emph{(Alon et al. \cite{AlonGLM})} For every $t\geq 1$ there is
a $(t+3)$-coloring of the real line such that no interval has a fair
splitting of size at most $t$.
\end{theorem}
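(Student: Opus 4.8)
The plan is to reduce the statement to the construction of a single coloring whose vector of color‑measures is ``generic'' enough to defeat every candidate fair splitting, and then to build such a coloring from a suitably non‑repetitive sequence.

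First I would recast ``fair splitting of size at most $t$'' in linear‑algebraic terms. For a measurable $k$‑coloring $c$ put $F_i(x)=\mu\bigl(c^{-1}(i)\cap[0,x]\bigr)$, so each $F_i$ is nondecreasing and $1$‑Lipschitz and $\sum_{i=1}^{k}F_i(x)=x$. After merging consecutive pieces given to the same thief, a fair splitting of $[a,b]$ of size at most $t$ is precisely a chain $a=z_0<z_1<\cdots<z_{r+1}=b$ with $1\le r\le t$ such that for every color $i$
\[ \sum_{j=0}^{r}(-1)^{j}\bigl(F_i(z_{j+1})-F_i(z_j)\bigr)=0. \]
Summing over $i$ and using $\sum_i F_i(x)=x$ turns one of these $k$ scalar equations into a ``total length halved'' condition on $z_0,\dots,z_{r+1}$ alone, while the remaining $k-1=t+2$ equations genuinely see the coloring. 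A chain involves at most $r+2\le t+2$ real parameters, whereas altogether there are $t+3$ scalar equations, one more than the number of parameters, so for each fixed $r$ one expects a ``sufficiently non‑degenerate'' coloring to make all of these finitely parametrized systems inconsistent simultaneously. The work of the proof is to exhibit an explicit such coloring using only $t+3$ colors.

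Second I would build the coloring from a strongly non‑repetitive sequence. The intuition for why $t+3$ colors should suffice is the moment curve: if $\vec F(x)$ behaved like $x\mapsto(x,x^2,\dots,x^{t+3})$, a Vandermonde argument would force all the $z_j$ in the displayed system to coincide; a measure CDF cannot grow polynomially, but this suggests that a coloring whose local color densities vary in a sufficiently aperiodic way should work. In the spirit of the Erd\H{o}s non‑repetitiveness theme, I would take a word $w=w_1w_2\cdots$ over $\{1,\dots,t+3\}$ that, as a factor, avoids every ``balanced alternating pattern'' with at most $t+1$ blocks (for $t=1$ this is exactly an abelian square‑free word; larger $t$ needs a stronger condition), produced either by an explicit Thue‑type construction or by the Lov\'{a}sz Local Lemma, and color $[n,n+1)$ by $w_n$. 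A careful fractional‑part analysis should then show that a fair splitting of an arbitrary real interval $[a,b]$ — endpoints and cuts anywhere — induces a corresponding balanced alternating pattern (with at most single‑cell gaps) on the integer cells it meets, which the sequence $w$ forbids.

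The main obstacle is the construction itself: isolating the precise non‑repetition property that (a) provably rules out fair splittings of size at most $t$ for \emph{all} intervals, including the delicate cases where cuts or endpoints are irrational or where several $z_j$ nearly collide, and (b) is realizable over an alphabet as small as $t+3$. Driving the alphabet size down to $t+3$ rather than a larger function of $t$ is exactly the tight point — the reduction from the continuous to the discrete problem and the choice of forbidden patterns must respect the one‑extra‑equation margin of the linear‑algebra count, so essentially no color can be wasted.
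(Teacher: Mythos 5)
Your proposal has a decisive gap at its foundation: the coloring you build cannot satisfy the theorem, no matter which word $w$ you choose. If you color each unit cell $[n,n+1)$ by a single letter $w_n$, then any interval contained entirely in one cell is monochromatic, and cutting it at its midpoint is a fair splitting of size $1\le t$. More generally, \emph{every} coloring that is constant on some interval of positive length admits a continuous abelian square, so a valid coloring must be nowhere locally constant in a strong measure-theoretic sense; no piecewise-constant (step) coloring can work. This is precisely why the continuous theorem does not follow from Ker\"{a}nen's abelian-square-free word even for $t=1$, and why the paper treats the continuous and discrete statements as genuinely different results. A second, independent obstruction: even granting a reduction to a discrete pattern-avoidance problem, the word you need --- one over $t+3$ letters avoiding all balanced alternating patterns with up to $t+1$ blocks --- is not known to exist for $t\ge 2$; the paper explicitly records that the finiteness of $g(t,1)$ for $t\ge 2$ is open. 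So your plan routes the proof through an open problem.

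The actual argument (in \cite{AlonGLM}, and generalized in this paper) is non-constructive. One puts a complete metric on the space of measurable $k$-colorings (normalized measure of the symmetric difference on $[-n,n]$, summed with weights $2^{-n-1}$), writes the bad colorings as a countable union of sets $B_n(t)$ of colorings admitting a fair splitting of size at most $t$ with granularity at least $1/n$ inside $[-n,n]$, and shows each $B_n(t)$ is closed (a compactness argument) and has empty interior. The empty-interior step is where the real content lives: one approximates an arbitrary coloring by a step coloring and then perturbs it by planting, in each cell, tiny patches of every color whose measures are linearly independent over $\mathbb{Q}$; a fair splitting would force a nontrivial rational dependence among these measures unless the number of colors is small, giving a contradiction when $k\ge t+3$. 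Your linear-algebra count of equations versus parameters is in the right spirit as motivation, but the mechanism that realizes it is genericity over $\mathbb{Q}$ of the planted measures inside a Baire category argument, not an explicit non-repetitive sequence.
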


For $t=1$ the result asserts that there is a $4$-coloring of the real line
avoiding (continuous) \emph{abelian squares} (adjacent intervals with equal
measure of every color). The question whether a similar property holds for
the integers was posed in 1961 by Erd\H{o}s \cite{Erdos}, and solved in the
affirmative by Ker\"{a}nen \cite{Keranen}\ in 1991. Curiously the number of
colors is the same in both versions, though in continuous variant it is not
known whether it is optimal.

In this paper we extend the above result in the spirit of the theorem of de
Longueville and \v{Z}ivaljevi\'{c} \cite{LonguevilleZ}. Let $d$ be a fixed
positive integer, and let $c$ be a measurable coloring of $\mathbb{R}^{d}$. A%
\emph{\ cube} in $\mathbb{R}^{d}$ is just a Cartesian product of $d$
non-empty intervals (of the same length) lying on distinct coordinate axes.
A \emph{splitting} of a cube is specified by a family of axes-aligned
hyperplanes. A splitting of a colored cube is \emph{fair} if there is a
partition of the resulting family of cuboids into two families, each
capturing exactly half of the total measure of every color.

\begin{theorem}
\label{deLZ}\emph{(de Longueville and \v{Z}ivaljevi\'{c} \cite{LonguevilleZ})%
} Every $k$-colored\emph{\ }$d$-dimensional cube has a fair splitting using
at most $k$ axes aligned hyperplane cuts. Moreover, one may specify the
number of cuts in each direction arbitrarily.
\end{theorem}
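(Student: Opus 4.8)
The plan is to establish Theorem~\ref{deLZ} by a configuration-space/test-map argument of Borsuk--Ulam type, generalizing the proof of the one-dimensional case via the Borsuk--Ulam theorem (cf.\ \cite{AlonWest}). It suffices to prove the sharper assertion: for any prescribed integers $k_{1},\dots ,k_{d}\geq 0$ with $k_{1}+\cdots +k_{d}=k$ there is a fair splitting using exactly $k_{i}$ hyperplanes orthogonal to the $i$-th axis; this yields both parts of the statement. Normalize the cube to $[0,1]^{d}$. As configuration space take the product of spheres $X=S^{k_{1}}\times \cdots \times S^{k_{d}}$: a point $x=(x^{(1)},\dots ,x^{(d)})$, with $x^{(i)}=(x^{(i)}_{1},\dots ,x^{(i)}_{k_{i}+1})$, encodes in direction $i$ the $k_{i}$ cut positions (placed at the partial sums of $(x^{(i)}_{1})^{2},(x^{(i)}_{2})^{2},\dots$, which sum to $1$) together with a sign $\operatorname{sgn}(x^{(i)}_{j})$ attached to the $j$-th slab. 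The box $B_{\mathbf{j}}$, $\mathbf{j}=(j_{1},\dots ,j_{d})$, is given the product sign $\prod_{i}\operatorname{sgn}(x^{(i)}_{j_{i}})$ and placed in $F_{1}$ or $F_{2}$ accordingly.

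Define the test map $f=(f_{1},\dots ,f_{k})\colon X\to \mathbb{R}^{k}$ by
\[
f_{c}(x)=\sum_{\mathbf{j}}\Bigl(\prod_{i=1}^{d}\operatorname{sgn}\bigl(x^{(i)}_{j_{i}}\bigr)\Bigr)\,\mu _{c}(B_{\mathbf{j}}),
\]
where $\mu _{c}$ denotes the restriction of Lebesgue measure to the set of points colored $c$; thus $f_{c}(x)$ is the signed excess of color $c$ in $F_{1}$ over $F_{2}$. Since each $\mu _{c}$ is absolutely continuous, $\mu _{c}(B_{\mathbf{j}})$ varies continuously with $x$ and the signs on the measure-zero walls are irrelevant, so $f$ is continuous. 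Let $G=(\mathbb{Z}_{2})^{d}$ act on $X$ antipodally in each coordinate; flipping the $i$-th factor reverses the sign of every box, hence negates $f$, so $f$ is $G$-equivariant for the $G$-action on $\mathbb{R}^{k}$ in which every generator acts as $-\mathrm{id}$. A zero of $f$ is exactly a fair splitting of the prescribed type, so everything reduces to showing that every such $G$-equivariant map vanishes somewhere.

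Proving this equivariant statement is the heart of the matter, and the main obstacle is that it is \emph{false} for arbitrary (rather than complementary) measures. Indeed $\dim X=k=\dim\mathbb{R}^{k}$, and on the free $G$-space $X$ the primary mod-$2$ obstruction to a nonvanishing $G$-map $X\to\mathbb{R}^{k}\setminus\{0\}$ is the class $(a_{1}+\cdots +a_{d})^{k}$ in $H^{k}\bigl(\prod_{i}\mathbb{RP}^{k_{i}};\mathbb{Z}_{2}\bigr)=\mathbb{Z}_{2}[a_{1},\dots ,a_{d}]/(a_{i}^{k_{i}+1})$, whose sole potentially nonzero term has coefficient $\binom{k}{k_{1},\dots ,k_{d}}$, and this multinomial coefficient can be even (already for $d=2$, $k_{1}=k_{2}=1$). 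What rescues the situation is the hypothesis that $c$ is a \emph{coloring}: the $\mu _{c}$ partition the cube, so $\sum_{c}\mu _{c}$ is Lebesgue measure, a product, and therefore
\[
\sum_{c=1}^{k}f_{c}(x)=\prod_{i=1}^{d}g_{i}\bigl(x^{(i)}\bigr),\qquad g_{i}\bigl(x^{(i)}\bigr):=\sum_{j}\operatorname{sgn}\bigl(x^{(i)}_{j}\bigr)\bigl(x^{(i)}_{j}\bigr)^{2},
\]
with each $g_{i}$ odd on $S^{k_{i}}$.

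The plan is to exploit this product relation to push $f$ into a smaller $G$-invariant subspace. For a direction $i$ with $k_{i}\geq 1$, the set $\{g_{i}=0\}\subset S^{k_{i}}$ is the boundary of a centrally symmetric convex body in a hyperplane, hence $G$-homeomorphic to $S^{k_{i}-1}$; over the corresponding $G$-invariant subspace $Z\subset X$ (a product of spheres of total dimension $k-1$) one has $\sum_{c}f_{c}\equiv 0$, so $f|_{Z}$ maps into $\{\sum_{c}v_{c}=0\}\cong\mathbb{R}^{k-1}$, a $G$-map whose primary mod-$2$ obstruction is now governed by $\binom{k-1}{k_{1},\dots ,k_{i-1},k_{i}-1,k_{i+1},\dots ,k_{d}}$. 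When this coefficient is odd (e.g.\ always when $d=2$) one concludes by the standard cohomological/Borsuk--Ulam argument, which ultimately rests on the one-dimensional case of Goldberg and West \cite{GoldbergWest}. I expect the main obstacle to be the remaining cases: since a coloring supplies only one product relation, a single such reduction may leave the coefficient even (e.g.\ $d=3$, $k_{1}=k_{2}=k_{3}=1$), and one must instead analyze $f$ on the whole locus $\bigcup_{i}\{g_{i}=0\}$ where $\sum_{c}f_{c}$ vanishes --- a Mayer--Vietoris/equivariant-index computation over this union of subspheres is where the real difficulty lies, and carrying it out uniformly for every distribution $(k_{1},\dots ,k_{d})$ is the crux of the theorem.
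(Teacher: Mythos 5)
This statement is quoted by the paper from de Longueville and \v{Z}ivaljevi\'{c} \cite{LonguevilleZ}; the paper itself contains no proof of it, so your attempt can only be judged on its own terms, and on those terms it has a genuine gap. Your setup is sound: the configuration space $S^{k_{1}}\times\cdots\times S^{k_{d}}$, the signed test map $f$, its continuity, and its $(\mathbb{Z}_{2})^{d}$-equivariance are all correct, and a zero of $f$ is indeed a fair splitting with the prescribed distribution of cuts. But the entire mathematical content of the theorem is the assertion that $f$ must vanish, and that is exactly what you do not prove. Your own analysis shows why no soft argument can close it: for a general equivariant map $X\to\mathbb{R}^{k}$ of this type the primary obstruction is $\binom{k}{k_{1},\dots,k_{d}}$ modulo $2$, which is even already for $d=2$, $k_{1}=k_{2}=1$, so nonvanishing equivariant maps of this shape exist and the coloring (partition) hypothesis must be used in an essential, not merely formal, way. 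Your proposed rescue --- restricting to the zero locus of $\sum_{c}f_{c}=\prod_{i}g_{i}$ and recomputing the obstruction --- is only sketched under the extra parity assumption that the new multinomial coefficient is odd, and you explicitly concede that the remaining cases (already $d=3$, $k_{1}=k_{2}=k_{3}=1$) require an analysis you have not carried out. That analysis is the theorem; what you have written is a research plan that correctly locates the obstacle but does not overcome it.

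Even in the cases you claim to settle, the phrase about concluding by the standard cohomological argument hides real work: you must verify that the hyperplane $\{\sum_{c}v_{c}=0\}$ carries the correct $(\mathbb{Z}_{2})^{d}$-representation, that the restriction of $f$ to the invariant subspace $Z$ has the stated target, and that the resulting obstruction class is the multinomial coefficient you name; none of this is routine bookkeeping when $d>1$. For a complete argument you should consult \cite{LonguevilleZ} directly, where the prescribed-distribution statement is established by a more delicate route; your sketch should not be presented as a proof of Theorem \ref{deLZ}.
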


We are interested in colorings avoiding cubes admitting a fair splitting
with a bounded number of cuts. The \emph{size} of the splitting is the
maximum number of axes aligned hyperplanes in the same direction. Our main
result reads as follows.

\begin{theorem}
\label{Main} For every pair of integers $t$,$d\geq 1$, and $k\geq
(t+4)^{d}-(t+3)^{d}+(t+2)^{d}-2^{d}+d(t+2)+3$, there is a $k$-coloring of $%
\mathbb{R}^{d}$ such that no cube has a fair splitting of size at most $t$.
\end{theorem}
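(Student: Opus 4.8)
The plan is to recast fair splittings as the vanishing of a multilinear system and then design the coloring so that this system is forced to be nondegenerate. First, observe that a fair splitting of size at most $t$ of a cube $Q=I_{1}\times\cdots\times I_{d}$ is exactly the following data: for each direction $j$ a subdivision of $I_{j}$ into consecutive intervals $J^{(j)}_{0},\dots ,J^{(j)}_{m_{j}}$ with $m_{j}\le t$, and a sign function $\sigma\colon\prod_{j}\{0,\dots ,m_{j}\}\to\{+1,-1\}$ on the resulting grid of boxes, such that for every color $\mathbf i$
\[
\sum_{\mathbf a}\sigma(\mathbf a)\,\bigl|c^{-1}(\mathbf i)\cap\bigl(J^{(1)}_{a_{1}}\times\cdots\times J^{(d)}_{a_{d}}\bigr)\bigr|=0 .
\]
Indeed, the family $F_{1}$ of cuboids carrying $\sigma=+1$ captures $\tfrac12$ of each color precisely when these identities hold. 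So it suffices to build a measurable $k$-coloring for which, for every cube and every such grid, no sign function satisfies all of these equations.

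Second, I would build $c$ from a one-dimensional gadget. Call a coloring $\gamma\colon\mathbb R\to\{1,\dots ,c_{1}\}$ \emph{rigid} if for every $m\le t+1$ and every choice of $m$ consecutive intervals $J_{0},\dots ,J_{m-1}$ the color-measure profiles $v(J_{s})=\bigl(|\gamma^{-1}(1)\cap J_{s}|,\dots ,|\gamma^{-1}(c_{1})\cap J_{s}|\bigr)$ are linearly independent. The coloring of Theorem~\ref{AGLM} only forbids $\{\pm1\}$-combinations of such profiles from vanishing, so rigidity is a genuine strengthening; producing a rigid coloring, say by refining the Alon et al. construction, is the delicate one-dimensional input. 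Granting it, set $c(x_{1},\dots ,x_{d})=(\gamma(x_{1}),\dots ,\gamma(x_{d}))$. Then the equations above become $(T_{1}\otimes\cdots\otimes T_{d})(\sigma)=0$, where $T_{j}$ is the linear map sending the $a$-th basis vector of $\mathbb R^{m_{j}+1}$ to $v(J^{(j)}_{a})$. Since $m_{j}+1\le t+1$, rigidity makes each $T_{j}$ injective, hence $T_{1}\otimes\cdots\otimes T_{d}$ is injective, so $\sigma\equiv 0$ — impossible, as $\sigma$ is $\{\pm1\}$-valued on a nonempty index set. This already proves the theorem, but only with $k=c_{1}^{d}$ colors.

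Third — and this is where the actual work and the exact constant live — one must economize, since $c_{1}^{d}$ exceeds $(t+4)^{d}-(t+3)^{d}+(t+2)^{d}-2^{d}+d(t+2)+3$. Two levers are available. One can relax rigidity by exploiting that $\sigma$ is $\{\pm1\}$-valued (exactly as in the case $d=1$, where the bare Alon et al. property already suffices), which permits a cheaper one-dimensional gadget whose cost, together with a bounded number of "guard" colors in each of the $d$ coordinate directions, should account for the linear term $d(t+2)+3$. And one can identify two product colors whenever no admissible grid of size at most $t$ can place both their preimages in "active" boxes simultaneously, so that only a shell-shaped family of index tuples needs a private color — this is where the combinatorial term $(t+4)^{d}-(t+3)^{d}+(t+2)^{d}-2^{d}$ comes from. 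I expect the main obstacle to be precisely this: proving that the identifications never weaken the linear system enough to admit a $\{\pm1\}$ solution, uniformly over all cubes and all grids of size at most $t$.

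Finally I would verify that $c$ is measurable (its color classes are countable unions of boxes), check that every cube together with every grid of size at most $t$ is covered by the rigidity/tensor argument even after the identifications, and conclude that no cube has a fair splitting of size at most $t$, which is the assertion of the theorem.
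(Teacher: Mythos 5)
Your reduction of fairness to the vanishing of a signed sum over the grid of boxes is correct, and the tensor-product observation is sound: for the product coloring $c=(\gamma(x_1),\dots,\gamma(x_d))$ the system does become $(T_1\otimes\cdots\otimes T_d)\sigma=0$, and injectivity of each $T_j$ would force $\sigma\equiv 0$, contradicting $\sigma$ being $\{\pm1\}$-valued. But there are two genuine gaps. First, the existence of a ``rigid'' one-dimensional coloring --- one in which the color-measure profiles of \emph{any} $t+1$ consecutive intervals are linearly independent over $\mathbb{R}$ --- is exactly where the difficulty lives, and you do not prove it. It does not follow from Theorem \ref{AGLM}, which only excludes $\{\pm1\}$-combinations of profiles from vanishing; full linear independence for all choices of consecutive intervals (an uncountable family of conditions, including arbitrarily short intervals near density points of a single color, where the profiles become nearly parallel) is a strictly stronger property, and the Alon et al. coloring is itself obtained non-constructively by a Baire category argument, so there is no explicit construction to ``refine.'' Second, even granting rigidity, you obtain the conclusion only for $k\ge c_1^{d}$ colors with $c_1\ge t+1$, whereas the theorem asserts it for every $k\ge (t+4)^{d}-(t+3)^{d}+(t+2)^{d}-2^{d}+d(t+2)+3$; the entire passage from $c_1^{d}$ down to this bound --- the color identifications and the claim that they never weaken the linear system --- is presented as a list of ``levers,'' and you yourself flag the key step as an unresolved obstacle. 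Since the theorem is precisely the statement with that bound, the proof is not complete.

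For comparison, the paper does not construct the coloring at all: it works in the complete metric space $\mathcal{M}$ of measurable $k$-colorings, shows that the colorings admitting a bad cube of granularity at least $1/n$ inside $[-n,n]^{d}$ form a closed set with empty interior (emptiness of the interior being obtained by perturbing a cube coloring with tiny boxes whose measures are linearly independent over $\mathbb{Q}$ and then counting dimensions against the quantity $D(d)$), and concludes by the Baire category theorem. If you want to salvage your route, the rigidity lemma is itself most plausibly proved by the same category method, at which point the two arguments converge in spirit; the tensor-product structure is an attractive way to organize the linear algebra, but it does not by itself replace the existence argument or deliver the stated number of colors.
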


The proof uses Baire category argument applied to the space of all
measurable colorings of $\mathbb{R}^{d}$. The lower bound on the number of
colors in the above theorem is almost surely not optimal. In the final
section we discuss several open problems and further directions. The most
intriguing seems a discrete version of the multidimensional necklace
splitting problem: what is the least number of axes aligned hyperplanes
needed to a fair splitting of a discrete $k$-colored cuboid in $\mathbb{R}%
^{d}$?

\section{Proof of the main result}

Recall that a set in a metric space is \emph{nowhere dense} if the interior
of its closure is empty. A set is said to be of \emph{first category} if it
can be represented as a countable union of nowhere dense sets. In the proof
of theorem \ref{Main} we apply the Baire category theorem (see \cite{Oxtoby}%
).

\begin{theorem}
\emph{(Baire Category Theorem)} If $X$ is a complete metric space and $A$ is
a set of first category in $X$, then $X\setminus A$ is dense in $X$ (and in
particular is nonempty).
\end{theorem}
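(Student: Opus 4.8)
The plan is to prove this by the classical nested-balls construction, the one place where completeness enters being a Cantor-type intersection argument. Write $A = \bigcup_{n\geq 1} A_n$ with each $A_n$ nowhere dense. Since $\overline{A_n}$ is also nowhere dense and contains $A_n$, I may replace $A_n$ by $\overline{A_n}$ and assume each $A_n$ is closed; then $U_n := X\setminus A_n$ is open, and it is dense precisely because the interior of $A_n = \overline{A_n}$ is empty. Showing that $X\setminus A$ is dense amounts to showing that an arbitrary nonempty open set $V\subseteq X$ meets $\bigcap_{n\geq 1} U_n = X\setminus A$; taking $V=X$ then gives nonemptiness.

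First I would build, by induction on $n$, points $x_n\in X$ and radii $r_n>0$ such that the closed balls $\overline{B}(x_n,r_n)$ are nested, satisfy $\overline{B}(x_n,r_n)\subseteq V\cap U_1\cap\cdots\cap U_n$, and have $r_n\to 0$. To start, $V\cap U_1$ is nonempty (density of $U_1$) and open, so it contains an open ball, hence a closed ball $\overline{B}(x_1,r_1)$ with $0<r_1<1$; here one uses the elementary fact that in a metric space a nonempty open set $W$ containing $B(y,\rho)$ also contains $\overline{B}(y,\rho/2)$. For the inductive step, $B(x_n,r_n)\cap U_{n+1}$ is nonempty (density of $U_{n+1}$) and open, so it contains a closed ball $\overline{B}(x_{n+1},r_{n+1})$, and by shrinking the radius I arrange $0<r_{n+1}<r_n/2$, whence $r_n<2^{1-n}\to 0$.

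Next I would invoke completeness. For $m\geq n$ one has $x_m\in\overline{B}(x_n,r_n)$, so $d(x_n,x_m)\leq r_n\to 0$ and $(x_n)$ is Cauchy; let $x=\lim_n x_n$, which exists in $X$ by completeness. Fixing $n$, the tail $\{x_m : m\geq n\}$ lies in the closed set $\overline{B}(x_n,r_n)$, hence so does its limit $x$; thus $x$ belongs to every $\overline{B}(x_n,r_n)$. Therefore $x\in\overline{B}(x_1,r_1)\subseteq V$ and $x\in\overline{B}(x_n,r_n)\subseteq U_n$ for all $n$, so $x\in V\cap\bigcap_{n\geq 1} U_n = V\cap(X\setminus A)$. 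Since $V$ was an arbitrary nonempty open set, $X\setminus A$ is dense in $X$, and in particular nonempty.

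There is no deep obstacle in this argument; the only points needing a little care are the passage from open balls to closed balls noted above and, more importantly, forcing $r_n\to 0$. It is exactly the statement ``a decreasing sequence of nonempty closed sets with diameters tending to $0$ in a complete metric space has nonempty intersection'' that consumes the completeness hypothesis, and the conclusion genuinely fails without it — so the geometric shrinking of the radii is the step I would be most careful to get right.
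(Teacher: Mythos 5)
The paper does not prove this theorem; it is quoted as a known result with a citation to Oxtoby's \emph{Measure and Category}, so there is no in-paper argument to compare against. Your proof is the standard nested closed-balls argument and it is correct: the reduction to closed nowhere dense sets (passing to closures only shrinks the complement, so density of $X\setminus\bigcup\overline{A_n}$ suffices), the inductive construction of closed balls $\overline{B}(x_n,r_n)\subseteq V\cap U_1\cap\cdots\cap U_n$ with $r_n\to 0$, and the Cantor intersection step via completeness are all handled properly, including the small technical point of passing from an open ball inside an open set to a closed ball of half the radius.
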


Our plan is to follow a similar reasoning to that of \cite{AlonGLM}. We will
construct a suitable metric space of colorings of $\mathbb{R}^{d}$, and then
we will demonstrate that the subset of \textquotedblleft bad
colorings\textquotedblright\ is of first category.

\subsection{The setting}

Let $k$ be a fixed positive integer and let $\{1,2,\ldots ,k\}$ be the set
of colors. Let $f$ and $g$ be two measurable colorings of $\mathbb{R}^{d}$.
For a positive integer $n$ we set 
\begin{equation*}
D_{n}(f,g)=\{x\in \lbrack -n,n]^{d}\colon f(x)\neq g(x)\}.
\end{equation*}%
Clearly $D_{n}(f,g)$ is Lebesgue measurable so we may define the normalized
distance between $f$ and $g$ on $[-n,n]^{d}$ by 
\begin{equation*}
d_{n}(f,g)=\frac{\lambda (D_{n}(f,g))}{n^{d}},
\end{equation*}%
where $\lambda $ is the $d$-dimensional Lebesgue measure. Since $d_{n}(f,g)$
is bounded from above by $2^{d}$, we may define the distance between two
measurable colorings $f$ and $g$ by 
\begin{equation*}
d(f,g)=\sum_{n=1}^{\infty }\frac{d_{n}(f,g)}{2^{n+1}}.
\end{equation*}%
Identifying colorings whose distance is zero gives a metric space $\mathcal{M%
}$ of equivalence classes of all measurable $k$-colorings. Note that the
splitting properties are preserved by equivalent colorings.

\begin{lemma}
The space $\mathcal{M}$ is a complete metric.
\end{lemma}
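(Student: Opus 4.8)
The plan is to verify first that $d$ is genuinely a metric on the set of equivalence classes, and then that every Cauchy sequence converges. For the metric axioms: each $d_n$ is a pseudometric (symmetry is obvious; the triangle inequality follows because $D_n(f,h)\subseteq D_n(f,g)\cup D_n(g,h)$, so $\lambda(D_n(f,h))\le \lambda(D_n(f,g))+\lambda(D_n(g,h))$, and dividing by $n^d$ preserves this), so the weighted sum $d=\sum_n d_n/2^{n+1}$ is also a pseudometric; the series converges because $d_n\le 2^d$ so $d(f,g)\le 2^d\sum_n 2^{-(n+1)}=2^d$. Passing to equivalence classes, $d(f,g)=0$ forces $d_n(f,g)=0$ for every $n$, hence $\lambda(D_n(f,g))=0$ for every $n$, hence $f=g$ almost everywhere on all of $\mathbb{R}^d$, i.e. $f$ and $g$ represent the same point of $\mathcal{M}$; so $d$ separates points. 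Note also that $d_n\le d_m$ whenever $n\le m$ (since $[-n,n]^d\subseteq[-m,m]^d$ and $n^d\le m^d$ — more carefully, $\lambda(D_n)\le\lambda(D_m)$ and dividing by the smaller $n^d$ only increases the ratio), which is convenient but not essential.

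For completeness, let $(f_j)$ be a Cauchy sequence in $\mathcal{M}$. The key observation is that convergence in $d$ is equivalent to convergence in each $d_n$ simultaneously: if $d(f_j,f_{j'})<\varepsilon$ then $d_n(f_j,f_{j'})<2^{n+1}\varepsilon$, so $(f_j)$ is Cauchy with respect to every $d_n$, i.e. with respect to convergence in measure on each box $[-n,n]^d$. It is classical that the space of (equivalence classes of) measurable functions on a finite measure space, with the topology of convergence in measure, is complete; applying this with the target the finite set $\{1,\ldots,k\}$ — where convergence in measure of functions valued in a discrete set means $\lambda\{x\in[-n,n]^d: f_j(x)\ne f(x)\}\to 0$ — gives for each $n$ a measurable limit coloring $f^{(n)}$ on $[-n,n]^d$. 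By uniqueness of the limit in measure and the nesting of the boxes, the restrictions agree: $f^{(n+1)}|_{[-n,n]^d}=f^{(n)}$ a.e. Hence they glue to a single measurable $k$-coloring $f$ on $\mathbb{R}^d$ with $d_n(f_j,f)\to 0$ for every $n$.

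It remains to upgrade pointwise-in-$n$ convergence to convergence in $d$, which is a routine "$\varepsilon/2$ plus tail" argument: given $\varepsilon>0$, choose $N$ with $2^d\sum_{n>N}2^{-(n+1)}<\varepsilon/2$ (possible since this is the tail of a convergent series and each $d_n\le 2^d$), then choose $J$ so that for $j\ge J$ we have $d_n(f_j,f)<\varepsilon/2$ for all $n\le N$; summing gives $d(f_j,f)<\varepsilon$. Thus $f_j\to f$ in $\mathcal{M}$, proving completeness. The only step requiring any real content is the completeness of convergence in measure on a finite measure space (extract an a.e.-convergent subsequence via a geometrically decreasing choice of indices, define $f$ as its a.e. limit, then show the whole Cauchy sequence converges to $f$ in measure); everything else is bookkeeping with the weights $2^{-(n+1)}$ and the set inclusions for $D_n$. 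I expect this classical completeness fact to be the main (and only) obstacle, and it can simply be cited.
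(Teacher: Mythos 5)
Your proof is correct and follows exactly the route the paper has in mind: the paper omits the argument, citing the classical completeness of measurable functions on a finite measure space under convergence in measure (equivalently, the measure algebra with the symmetric-difference metric), which is precisely the fact you reduce to via the weighted sum, the gluing over the nested boxes $[-n,n]^d$, and the tail estimate. One small slip: your parenthetical claim that $d_n\le d_m$ for $n\le m$ is false (if all disagreement lies in $[-1,1]^d$ then $d_1=\lambda(D_1)$ while $d_2=\lambda(D_1)/2^d$), but as you note it is never used, so nothing is affected.
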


We omit the proof of this lemma since this is a simple generalization of a
result stating that sets of finite measure in any metric space form a
complete metric space with symmetric difference as the distance function
(see \cite{AlonGLM}, \cite{Oxtoby}).

Let $t\geqslant 1$ be a fixed integer. Let $D_{t}$ be a subspace of $%
\mathcal{M}$ consisting of those $k$-colorings that avoid intervals having a 
$d$-dimensional fair splitting of size at most $t$ in each dimension. Denote
for future convenience%
\begin{equation*}
f(d,t)=(t+4)^{d}-(t+3)^{d}+(t+2)^{d}-2^{d}+d(t+2)+3.
\end{equation*}%
We will show that $D_{t}$ is not empty provided that $k\geqslant f(d,t)$. By 
\emph{granularity} of a splitting we mean the length of the shortest
subinterval $[z_{j}^{i},z_{j+1}^{i}]$ in the splitting. For $n\geqslant 1$
and $r_{1},\ldots ,r_{n}$, let $B_{n}^{(r_{i})}$ be the set of those
colorings from $\mathcal{M}$ for which there exists at least one $d$%
-dimensional cube in $[-n,n]^{d}$ having a $d$-dimensional fair splitting of
size exactly $r_{i}$ in the $i$-th dimension for each $i$ and granularity at
least $1/n$. Finally let us denote all the bad colorings by 
\begin{equation*}
B_{n}(t)=\bigcup_{r_{i}\leqslant t}B_{n}^{(r_{i})}.
\end{equation*}%
Clearly we have 
\begin{equation*}
D_{t}=\mathcal{M}\setminus \bigcup_{n=1}^{\infty }B_{n}(t).
\end{equation*}%
Now our aim is to apply Baire category theorem to show that the sets $%
B_{n}(t)$ are nowhere dense, provided that $k\geqslant
(t+4)^{d}-(t+3)^{d}+(t+2)^{d}-2^{d}+d(t+2)+3$.

\subsection{The sets $B_{n}(t)$}

We show that each set $B^{(r_i)}_n$ is a closed subset of $\mathcal{M}$.
Since $B_n(t)$ is a finite union of these sets, it must be closed too.

\begin{theorem}
The set $B_{n}^{(r_{i})}$ is a closed subset of $\mathcal{M}$ for every $%
r_{i}\geqslant 1$ and $n\geqslant 1$.
\end{theorem}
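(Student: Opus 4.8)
The plan is to exhibit $B_{n}^{(r_{i})}$ as the image, under projection to $\mathcal{M}$, of a closed subset of a product $W\times\mathcal{M}$ in which the factor $W$ is \emph{compact}; since projection along a compact factor is a closed map, this immediately gives that $B_{n}^{(r_{i})}$ is closed. First I would make precise the data certifying membership in $B_{n}^{(r_{i})}$ — call it a \emph{witness}. A witness consists of a cube $Q=\prod_{i=1}^{d}[a_{i},a_{i}+s]\subseteq[-n,n]^{d}$, recorded by the parameters $(a_{1},\dots,a_{d},s)$; the cut coordinates $z_{1}^{i}<\dots<z_{r_{i}}^{i}$ in the $i$-th direction for each $i$; and a bipartition of the resulting $N=\prod_{i}(r_{i}+1)$ sub-cuboids into families $F_{1}$ and $F_{2}$. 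The granularity requirement (every sub-interval has length at least $1/n$) together with $r_{i}\ge 1$ forces $s\ge(\max_{i}r_{i}+1)/n>0$ and $z_{j+1}^{i}-z_{j}^{i}\ge 1/n$, and all coordinates lie in $[-n,n]$; these are \emph{closed} conditions cutting out a bounded region, so the set of admissible geometric parameters is compact, and since the bipartition ranges over a finite set, the whole witness space $W$ is compact. By construction, $f\in B_{n}^{(r_{i})}$ if and only if some $w\in W$ is \emph{fair} for $f$.

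Next I would show that fairness is a closed condition on $W\times\mathcal{M}$. Writing $C_{1}(w),\dots,C_{N}(w)$ for the sub-cuboids of $w$, define for each color $j$
\[
\delta_{j}(w,f)=\sum_{C_{\ell}(w)\in F_{1}}\lambda\!\left(C_{\ell}(w)\cap f^{-1}(j)\right)-\sum_{C_{\ell}(w)\in F_{2}}\lambda\!\left(C_{\ell}(w)\cap f^{-1}(j)\right),
\]
so that $w$ is fair for $f$ precisely when $\delta_{j}(w,f)=0$ for all $j\in\{1,\dots,k\}$. The heart of the matter is the continuity of each map $(w,f)\mapsto\lambda(C_{\ell}(w)\cap f^{-1}(j))$ on $W\times\mathcal{M}$. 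This reduces to two elementary estimates: for cuboids $C,C'\subseteq[-n,n]^{d}$ one has $|\lambda(C\cap f^{-1}(j))-\lambda(C'\cap f^{-1}(j))|\le\lambda(C\triangle C')$, and for a fixed cuboid $C'\subseteq[-n,n]^{d}$ one has $|\lambda(C'\cap f^{-1}(j))-\lambda(C'\cap g^{-1}(j))|\le\lambda(D_{n}(f,g))=n^{d}d_{n}(f,g)$. Since $d_{n}(f,g)\le 2^{n+1}d(f,g)$, convergence in $\mathcal{M}$ controls the second term; and $\lambda(C\triangle C')\to 0$ whenever the defining parameters of $C$ converge to those of $C'$, because a cuboid depends continuously on its defining intervals in the symmetric-difference pseudometric (the boundary hyperplanes are Lebesgue-null). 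Hence each $\delta_{j}$ is continuous, so $Z=\{(w,f)\in W\times\mathcal{M}:\delta_{j}(w,f)=0\text{ for all }j\}$ is closed.

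Finally, $B_{n}^{(r_{i})}=\pi(Z)$, where $\pi\colon W\times\mathcal{M}\to\mathcal{M}$ is the projection; since $W$ is compact, $\pi$ is a closed map and $B_{n}^{(r_{i})}$ is closed. Concretely: if $f_{m}\in B_{n}^{(r_{i})}$ and $f_{m}\to f$ in $\mathcal{M}$, choose witnesses $w_{m}\in W$ fair for $f_{m}$; compactness of $W$ gives a subsequence with $w_{m_{j}}\to w\in W$, and the granularity lower bounds guarantee that $w$ is again a genuine witness of the same combinatorial type (the limit cube is non-degenerate and the limit cuts remain distinct, so the splitting still has size exactly $r_{i}$ in each direction); continuity then gives $\delta_{j}(w,f)=\lim_{j}\delta_{j}(w_{m_{j}},f_{m_{j}})=0$ for all $j$, so $f\in B_{n}^{(r_{i})}$.

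I expect the main obstacle to be the continuity claim of the second paragraph, and within it the bookkeeping ensuring that the \emph{limit} witness is legitimate — that is, that the granularity and ``size exactly $r_{i}$'' constraints (which make $W$ closed and keep the limit configuration non-degenerate) are propagated correctly to the limit. The two measure-theoretic inequalities are routine, but it is essential that every sub-cuboid occurring in $\delta_{j}$ stays inside the fixed box $[-n,n]^{d}$, so that the distance $d_{n}$ suffices to bound the relevant error and no tail of the metric $d$ enters.
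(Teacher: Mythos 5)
Your proposal is correct and takes essentially the same route as the paper: the paper extracts a convergent subsequence of cubes and a constant labeling function by compactness of $[-n,n]^{d}$ and finiteness of the bipartitions, and asserts that the limit splitting is fair. Your version merely packages this as closedness of projection along a compact witness space and fills in the continuity estimates (including the role of the granularity bound in keeping the limit non-degenerate) that the paper dismisses with ``it is easy to see.''
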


\begin{proof}
Let $\{f_{m}\}$ be a sequence of colorings converging in $\mathcal{M}$ to $f$%
. For each $m$ let $C_{m}$ denote a $d$-dimensional cube in $[-n,n]^{d}$ of
granularity $\geqslant 1/n$ and having a fair splitting into exactly $r_{i}$
points in the $i$-th dimension. Let us denote by $\phi _{m}\colon \lbrack
r_{1}]\times \ldots \times \lbrack r_{d}]\rightarrow \{1,2\}$ the labeling
function defining the two families from the fair splitting of $C_{m}$. Since 
$[-n,n]^{d}$ is compact we may assume that vertices of the sliced cube $%
C_{m} $ converge to vertices of some cube $C$ and since there is finite
number of labeling functions we may assume that $\phi _{m}=\phi $ for every $%
m$. Now it is easy to see that $\phi $ gives a fair splitting for $C$.
\end{proof}

Next we prove that each $B_{n}(t)$ has empty interior provided the number of
colors $k$ satisfies $k>(t+4)^{d}-(t+3)^{d}+(t+2)^{d}-2^{d}+d(t+2)+2$. For
this purpose let us call $f\in \mathcal{M}$ a cube coloring on $[-n,n]^{d}$
if there is a partition of $[-n,n]^{d}$ into some number of (half open) $d$%
-dimensional cubes of equal size in each dimension, each filled with only
one color. Let $I_{n}$ denote the set of all colorings from $\mathcal{M}$
that are cube colorings on $[-n,n]^{d}$.

\begin{lemma}
\label{lem.3} Let $f\in \mathcal{M}$ be a $k$-coloring. Then for every $%
\epsilon >0$ and $n\in \mathbb{N}$ there exists a coloring $g\in I_{n}$ such
that $d(f,g)<\epsilon $.
\end{lemma}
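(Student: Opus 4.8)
The plan is to approximate the given measurable $k$-coloring $f$ by a ``cube coloring'' $g$ that is constant on each cell of a fine grid dissection of $[-n,n]^d$, and to control the measure of the discrepancy set. First I would recall the basic fact from measure theory (a consequence of the regularity of Lebesgue measure, or of the Lebesgue density theorem) that for each color $i$ the measurable set $A_i = f^{-1}(i) \cap [-n,n]^d$ can be approximated from inside/outside by finite unions of dyadic cubes: for any $\delta>0$ there is an integer $N$ so that, writing $\mathcal{G}_N$ for the partition of $[-n,n]^d$ into $2^{dN}$ congruent half-open subcubes of side $2n/2^N$, the set of ``mixed'' cubes $Q\in\mathcal{G}_N$ on whose interior $f$ is not (almost everywhere) constant has total measure less than $\delta$. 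Equivalently, $\sum_i \lambda(A_i \,\triangle\, U_i^{(N)}) < \delta$, where $U_i^{(N)}$ is the union of those cubes of $\mathcal{G}_N$ that are ``mostly color $i$''. This is the only analytic input and it is standard; I would cite \cite{Oxtoby} or \cite{AlonGLM} for it rather than reprove it.

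Given such an $N$, I would define $g$ on $[-n,n]^d$ to be constant on each $Q\in\mathcal{G}_N$, assigning to $Q$ the color $i$ for which $\lambda(Q\cap A_i)$ is largest (breaking ties arbitrarily). Outside $[-n,n]^d$ I can simply set $g=f$, so that $D_m(f,g)$ for $m>n$ only sees the part inside $[-n,n]^d$; in fact $D_m(f,g)\subseteq [-n,n]^d$ for every $m$, hence $\lambda(D_m(f,g))\le \lambda(D_n(f,g))\le \delta'$ for a bound $\delta'$ coming from the majority-vote choice (one checks that choosing the majority color on each cube at most doubles the error relative to the best inside-approximation, so $\lambda(D_n(f,g)) \le 2\delta$ suffices, or one can just absorb the constant). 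Then
\begin{equation*}
d(f,g) = \sum_{m=1}^{\infty} \frac{d_m(f,g)}{2^{m+1}} \le \sum_{m=1}^{\infty} \frac{\lambda(D_n(f,g))}{m^{d}\,2^{m+1}} \le \lambda(D_n(f,g)) \le 2\delta,
\end{equation*}
using $m^d\ge 1$ and $\sum_{m\ge1}2^{-(m+1)}= \tfrac12$; choosing $\delta = \epsilon/2$ (or $\epsilon/3$ to be safe) gives $d(f,g)<\epsilon$. The resulting $g$ lies in $I_n$ by construction, since it is a cube coloring on $[-n,n]^d$ with all cubes of equal side length.

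The only mild subtlety — and the step I would be most careful about — is making precise ``$f$ is mostly constant on most cells of a sufficiently fine grid,'' i.e. that the total measure of the mixed cells tends to $0$ as the mesh refines. This follows because $\mathbf 1_{A_i}$ is approximated in $L^1$ by its conditional expectations onto the increasing dyadic filtration $\mathcal{G}_N$ (a martingale-convergence or Lebesgue-differentiation statement), and a cell is ``mixed'' precisely when that conditional expectation is strictly between $0$ and $1$ there, which forces a definite contribution to the $L^1$ error; summing over the finitely many colors $i\in\{1,\dots,k\}$ keeps everything finite. Everything else — the telescoping bound on $d(f,g)$, the majority-vote estimate, and membership in $I_n$ — is routine bookkeeping. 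Note that this lemma does not use the hypothesis $k\ge f(d,t)$ at all: it holds for every $k$-coloring, and its role is simply to show that the cube colorings $I_n$ are dense in $\mathcal M$, which will be combined later with an argument that every cube coloring can be perturbed off $B_n(t)$.
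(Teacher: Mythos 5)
Your overall strategy coincides with the paper's: approximate $f$ on $[-n,n]^{d}$ by a coloring constant on the cells of a fine uniform grid, set $g=f$ outside $[-n,n]^{d}$ so that $D_{m}(f,g)\subseteq D_{n}(f,g)$ for every $m$, and finish with the geometric-series bound on $d(f,g)$. The paper reaches the grid coloring in two steps (regularity of Lebesgue measure gives finite unions of boxes $C_{i}^{\ast}$ approximating each color class; a uniform $M^{d}$ grid is then superimposed, and only the cells meeting the boundaries of these finitely many boxes are miscolored), whereas you go directly to the dyadic grid via Lebesgue differentiation / $L^{1}$ martingale convergence. Both routes are standard and either one works; the difference is cosmetic.

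There is, however, one assertion in your write-up that is false as stated and is not equivalent to the one you actually use: namely, that for $N$ large the \emph{mixed} cells of $\mathcal{G}_{N}$ (those on which $f$ is not a.e.\ constant) have total measure less than $\delta$. Take $d=1$ and let $A_{1}$ be a fat Cantor set of positive measure in $[-n,n]$, with $A_{2}$ its complement. Since $A_{2}$ is open and dense, every cell meets $A_{2}$ in positive measure, so every cell meeting $A_{1}$ in positive measure is mixed; these cells cover $A_{1}$ up to a null set, so at every scale the mixed cells have total measure at least $\lambda(A_{1})>0$. The heuristic that a mixed cell "forces a definite contribution to the $L^{1}$ error" fails because a cell $Q$ with $p_{i,Q}=\lambda(Q\cap A_{i})/\lambda(Q)$ close to $0$ or $1$ contributes only $2\lambda(Q)\,p_{i,Q}(1-p_{i,Q})$ to $\|\mathbf{1}_{A_{i}}-E[\mathbf{1}_{A_{i}}\mid \mathcal{G}_{N}]\|_{1}$, which can be far smaller than $\lambda(Q)$. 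Fortunately the claim your construction actually needs is the weaker one you also state: the majority-vote error $\sum_{Q}\lambda(Q)\bigl(1-\max_{i}p_{i,Q}\bigr)$ is small. This does follow from the $L^{1}$ convergence you invoke, since $\max_{i}p_{i,Q}\geq 1/k$ gives $1-\max_{i}p_{i,Q}\leq k\,\max_{i}p_{i,Q}\bigl(1-\max_{i}p_{i,Q}\bigr)$, whence $\lambda(D_{n}(f,g))\leq \tfrac{k}{2}\sum_{i}\|\mathbf{1}_{A_{i}}-E[\mathbf{1}_{A_{i}}\mid \mathcal{G}_{N}]\|_{1}$. With that substitution for the false sentence (and the harmless constant $k/2$ absorbed into the choice of $\delta$), your proof is complete.
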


\begin{proof}
Let $C_{i}=f^{-1}(i)\cap \lbrack -n,n]^{d}$ and let $C_{i}^{\ast }\subseteq
\lbrack -n,n]^{d}$ be a finite union of intervals such that 
\begin{equation*}
\lambda \left( (C_{i}^{\ast }\backslash C_{i})\cup (C_{i}\backslash
C_{i}^{\ast })\right) <\frac{\epsilon }{2k^{2}}
\end{equation*}%
for each $i=1,2,\ldots ,k$. Define coloring $h$ so that for each $%
i=1,2,\ldots ,k$ the set $C_{i}^{\ast }\backslash (C_{1}^{\ast }\cup \ldots
\cup C_{i-1}^{\ast })$ is filled with color $i$, the rest of the cube $%
[-n,n]^{d}$ is filled with any of these colors. Moreover we set $h$ to be
equal $f$ outside $[-n,n]^{d}$. Note that $d(f,h)<\epsilon /2$ and $%
h^{-1}(i)\cap \lbrack -n,n]^{d}$ is a finite union of cubes. Let $%
A_{1},A_{2},\ldots ,A_{N}$ be the whole family of these cubes. Now split the
cube $[-n,n]^{d}$ into $M^{d}$ cubes $B_{1},\ldots B_{M^{d}}$ equally spaced
in $[-n,n]^{d}$. We define $g$ to be equal $h$ on $A_{i}$ whenever $%
A_{i}\subseteq B_{j}$ for some $j$ and $g(A_{i})$ is of any color otherwise.
Note that $g$ differs from $h$ on a set of $d$-dimensional measure at most $%
t((2n+4n/M)^{d}-(2n)^{d})$ so that for sufficiently large $M$ $%
d(g,h)<\epsilon /2$ and we get $d(f,g)<\epsilon $.
\end{proof}

In order to state the next lemma we will use the following notation: 
\begin{equation*}
D(d)=\sum_{i=1}^{d}{\binom{d}{i}}%
(t+2)^{i}(2^{d-i}-1)=(t+4)^{d}+1-(t+3)^{d}-2^{d}.
\end{equation*}

\begin{lemma}
If $k>(t+2)^{d}+d(t+2)+1+D(d)$ then each $B_{n}(t)$ has empty interior.
\end{lemma}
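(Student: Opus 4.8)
The plan is to use the fact, just established, that $B_{n}(t)$ is closed: a closed set has empty interior exactly when its complement is dense, so it suffices to show that for every $f\in\mathcal{M}$ and every $\epsilon>0$ there is a coloring $g$ with $d(f,g)<\epsilon$ such that no cube in $[-n,n]^{d}$ has a fair splitting of size $\leq t$ with granularity $\geq 1/n$ (together with the preceding theorem this makes $B_{n}(t)$ nowhere dense, as the Baire argument will need). By Lemma~\ref{lem.3} I would first replace $f$ by a cube coloring $h\in I_{n}$ with $d(f,h)<\epsilon/2$; its cells are half-open $d$-cubes of side $1/N$, where $N$ is a parameter we may take as large and as divisible as convenient. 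Since $g$ will agree with $h$ outside $[-n,n]^{d}$, the whole problem reduces to perturbing $h$ on $[-n,n]^{d}$ by recoloring only a set of small Lebesgue measure.

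The perturbation I would use overlays on $h$ a sparse periodic family of thin axes-aligned ``barrier layers''. Fix an integer $M$ large compared with $n$ and $t$; for each coordinate direction $i$ and each $j\in\mathbb{Z}$, recolor the thin slab $\{x:x_{i}\in[j/M,\,j/M+w/N]\}\cap[-n,n]^{d}$, with $w$ a bounded number of cells, by a color depending on $i$ and on $j\bmod(t+2)$, and at the overlaps of slabs coming from several directions use further colors, chosen according to which directions are involved and the corresponding residues. The total recolored measure inside $[-n,n]^{d}$ is of order $O(1/N)$ with the remaining parameters fixed, so for $N$ large we get $d(h,g)<\epsilon/2$, hence $d(f,g)<\epsilon$. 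A bookkeeping over the subsets of directions that can be involved shows that exactly $(t+2)^{d}+d(t+2)+1+D(d)$ colors are needed for this scheme, the inclusion--exclusion term $D(d)=\sum_{i}\binom{d}{i}(t+2)^{i}(2^{d-i}-1)$ arising from the overlap regions; this is precisely why we ask $k>(t+2)^{d}+d(t+2)+1+D(d)$.

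It remains to check $g\notin B_{n}(t)$. Suppose some cube $C\subseteq[-n,n]^{d}$ had a fair splitting of size $\leq t$, granularity $\geq 1/n$, and fair partition $F=F_{1}\cup F_{2}$. Since $M$ is large, every cuboid of the splitting is long enough in each direction $i$ to meet a full period of $t+2$ consecutive type-$i$ barrier layers, hence layers of all $t+2$ of the corresponding colors; heuristically, along direction $i$ the coloring $g$ then emulates the extremal example for Theorem~\ref{deLZ} with $t+2$ colors, which cannot be fairly split with fewer than $t+2$ cuts in that direction. As the splitting uses at most $t<t+2$ hyperplanes orthogonal to the $i$-th axis, at least two of these barrier colors have no cut through any of their layers; running a parity argument on how many whole layers of such a color fall on each side of the partition, simultaneously in all $d$ directions and keeping track of which directions' barriers each cuboid sees, produces the contradiction.

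The genuine obstacle is this last step. The difficulties are that the splitting may use a different number $r_{i}\leq t$ of hyperplanes in each direction and these need not sit on the barrier lattice; that a single cuboid meets barrier layers of all $d$ directions at once, so one cannot safely argue one direction at a time; and that, to land on the stated bound rather than a weaker one, the parity/pigeonhole argument must be organized so as to reproduce exactly the inclusion--exclusion count of colors. Everything else --- closedness of the sets $B_{n}^{(r_{i})}$, density of $I_{n}$, and the measure estimate for the perturbation --- is routine given what has already been proved.
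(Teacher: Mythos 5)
There is a genuine gap, and it sits exactly where you flag it: the ``last step'' you defer is the entire content of the lemma, and the construction you propose cannot support it. Your barrier layers are \emph{periodic}: in each direction the $t+2$ barrier colors recur with a fixed period. A periodic arrangement is essentially the easiest thing to split fairly, not the hardest --- a single hyperplane through the middle of a region containing an even number of full periods bisects every layer color exactly, so your coloring does not ``emulate the extremal example'' for Theorem~\ref{deLZ}; that example requires each color to occupy one consecutive block, which is the opposite of a periodic overlay. Moreover, the cuts of a putative fair splitting may sit at arbitrary real positions off your lattice, so the balancing condition is an equation between real measures, and no parity or pigeonhole bookkeeping on whole layers can rule it out: partial layers contribute arbitrary real amounts that can compensate each other. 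Your reading of $D(d)$ as an inclusion--exclusion count of slab overlaps also does not match its actual role.

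The paper's proof replaces your periodic barriers by a perturbation designed precisely to kill all such real-valued cancellations: inside each cell of the cube coloring $g$ it plants one tiny cube of each color whose measures $2^{d}m^{j}_{i_{1},\ldots,i_{d}}$ are chosen \emph{linearly independent over} $\mathbb{Q}$. A fair splitting of a cube $C$ of granularity $\geq 1/n$ then yields, for each color, a linear relation among these measures and the quantities $\alpha_{i}(t_{1},\ldots,t_{s})$ (the measures of the pieces into which the splitting hyperplanes cut the cells), and the latter span a $\mathbb{Q}$-vector space of dimension at most $D(d)$. After discarding white, the at most $(t+2)^{d}$ colors met by vertices of the splitting complex, and the at most $d(t+2)$ colors met by its $(d-1)$-cells, more than $D(d)$ colors remain, each contributing a rationally independent relation that must lie in a $D(d)$-dimensional space --- a contradiction. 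This rational-independence mechanism is the key idea your proposal is missing; without it (or a substitute of comparable strength) the approach does not go through.
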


\begin{proof}
Let $f\in B_{n}(t)$ be any bad coloring. Let $U(f,\epsilon )$ be the open $%
\epsilon $-neighborhood of $f$ in the space $\mathcal{M}$. Assume the
assertion of the lemma is false: there is some $\epsilon >0$ for witch $%
U(f,\epsilon )\subseteq B_{n}(t)$. By Lemma \ref{lem.3} there is a coloring $%
g\in I_{n}$ such that $d(f,g)<\epsilon /2$, so that $U(g,\epsilon
/2)\subseteq B_{n}(t)$. The idea is to modify slightly the cube coloring $g$
so that the new coloring will still be close to $g$, but there will be no
cube in $[-n,n]^{d}$ possessing a fair splitting of size at most $t$ and
granularity at least $1/n$. Without loss of generality we may assume that
there are equally spaced cubes $C_{i_{1},\ldots ,i_{d}}$ for $i_{1},\ldots
,i_{d}\in \{1,2,3,\ldots ,N\}$ in $[-n,n]^{d}$ such that $1>6n^{2}/N$ each
cube is filled with a unique color in the cube coloring $g$. Let $\delta >0$
be a real number satisfying 
\begin{equation*}
\delta <\min \left\{ \frac{\sqrt[d]{\epsilon }}{2N},\frac{2n}{N^{2}}\right\}
.
\end{equation*}%
Choose a color (which we will call from now on "white"). Let $%
W_{i_{1},\ldots ,i_{d}}^{\prime }$ where $i_{1},\ldots ,i_{d}\in
\{1,2,\ldots ,N\}$ be a cube $[0,2\delta ]^{d}$ colored as follows: choose a
countable set 
\begin{equation*}
\{m_{i_{1},\ldots ,i_{d}}^{j}\}_{j=1,\ldots ,k;\;i_{1},\ldots ,i_{d}\in
\{1,2,3\ldots N\}}
\end{equation*}%
of real numbers linearly independent over $\mathbb{Q}$ such that $%
0<m_{i_{1},\ldots ,i_{d}}^{j}<(\delta /k)^{d}$. We color $W_{i_{1},\ldots
,i_{d}}^{\prime }$ white except for small cubes 
\begin{equation*}
V_{i_{1},\ldots ,i_{d}}^{\eta }=\left( \frac{2\eta -1}{k}\delta ,\ldots ,%
\frac{2\eta -1}{k}\delta \right) +\prod_{j=1}^{d}\left[ -\sqrt[d]{%
m_{i_{1},\ldots ,i_{d}}^{j}},\sqrt[d]{m_{i_{1},\ldots ,i_{d}}^{j}}\right]
\end{equation*}%
colored using color $\eta $ for $\eta =1,2,\ldots k$. Note that the $d$%
-dimensional Lebesgue measure of $V_{i_{1},\ldots ,i_{d}}^{\eta }$ is equal $%
2^{d}m_{i_{1},\ldots ,i_{d}}^{j}$. Hence measures of these cubes are
linearly independent over $\mathbb{Q}$.

Now modify the coloring $g$ to get a coloring $h$ outside $B_{n}(t)$. The
coloring $h$ is equal to $g$ outside $[-n,n]^{d}$. Inside $C_{i_{1},\ldots
,i_{d}}$ the coloring $h$ is equal $g$ except in 
\begin{equation*}
W_{i_{1},\ldots ,i_{d}}=\left( \left( i_{1}-\frac{1}{2}-\delta \right) \frac{%
2n}{N}-n,\ldots ,\left( i_{d}-\frac{1}{2}-\delta \right) \frac{2n}{N}%
-n\right) +W_{i_{1},\ldots ,i_{d}}^{\prime }
\end{equation*}%
where $h$ is defined by the coloring of $W_{i_{1},\ldots ,i_{d}}^{\prime }$.

Note that $d(g,h)<\epsilon /2$ so that there exists a $d$-dimensional cube $%
C $ in $[-n,n]^{d}$ with granularity at least $1/n$ such that there is a
fair splitting of size at least $t$. The fair splitting divides $C$ into at
most $(t+1)^{d}$ cubes hence we obtain a $d$-dimensional cell complex in $%
[-n,n]^{d}$ (which we will also denote by $C$). Let us denote by $A$ the
measure of $C_{i_{1},\ldots ,i_{d}}\setminus W_{i_{1},\ldots ,i_{d}}$ (note
that $A$ does not depend on the set of indexes chosen and we may assume it
is linearly independent with the $m_{i_{1},\ldots ,i_{d}}^{j}$ chosen
before).

By the determinant of $C_{i_{1},\ldots ,i_{d}}$ in $C$ (denoted by $%
\det_{C}C_{i_{1},\ldots ,i_{d}}$) we mean the lowest dimension of cells $C$
that intersect $C_{i_{1},\ldots ,i_{d}}$ (there is only one cell reaching
the minimum -- denoted by $d_{C}(C_{i_{1},\ldots ,i_{d}})$). If $%
C_{i_{1},\ldots ,i_{d}}$ lays outside $C$ we set $\det_{C}C_{i_{1},\ldots
,i_{d}}=d$. Note that cells of $C$ divide each cube $C_{i_{1},\ldots ,i_{d}}$
into $2^{\limfunc{codim}(\det_{C}C_{i_{1},\ldots ,i_{d}})}$ cubes of
measures 
\begin{equation*}
\alpha _{1}(d_{C}(C_{i_{1},\ldots ,i_{d}})),\alpha
_{2}(d_{C}(C_{i_{1},\ldots ,i_{d}})),\ldots ,\alpha _{2^{\limfunc{codim}%
(\det_{C}C_{i_{1},\ldots ,i_{d}})}}(d_{C}(C_{i_{1},\ldots ,i_{d}}))
\end{equation*}%
and their sum is equal to $A$. In fact (up to indexing) $\alpha
_{i}(d_{C}(C_{i_{1},\ldots ,i_{d}}))$ does not depend on $%
d_{C}(C_{i_{1},\ldots ,i_{d}})$ but on the $\det_{C}(C_{i_{1},\ldots
,i_{d}}) $-dimensional subspace of $\mathbb{R}^{d}$ spanned by it. The
subspace can be identified by a suitable choice of $\limfunc{codim}(\det
C_{i_{1},\ldots ,i_{d}})$ slices (or ends) of $C$ on some of the dimensions.
Hence we get that $\alpha _{i}(d_{C}(C_{i_{1},\ldots ,i_{d}}))=\alpha
_{i}(t_{1},\ldots ,t_{s})$ for $t_{1},\ldots ,t_{s}\in \{0,1,2,\ldots ,t+1\}$
and $s=0,1,2,\ldots ,d$. Of course $\alpha _{1}(\emptyset )=A$.

Note that the dimension of the space spanned by $\alpha _{i}(t_{1},\ldots
,t_{s})$ where $s>0$ is no greater than $D(d)$. Now note that all the
vertices of $C$ are colored at most by $(t+2)^{d}$ colors. Moreover cells of
dimensions $d-1$ of $C$ intersect at most one of the cubes $V_{i_{1},\ldots
,i_{d}}^{\eta }\subseteq C_{i_{1},\ldots ,i_{d}}$ and two such cell
intersect the cubes of the same color if they span the same subspace of $%
\mathbb{R}^{d}$. Since there are at most $d(t+2)$ different subspaces of $%
\mathbb{R}^{d}$ obtained in such a way then if $C$ intersects one of $%
V_{i_{1},\ldots ,i_{d}}^{\eta }\subseteq C_{i_{1},\ldots ,i_{d}}$ then $d-1$
of $C$ also does and it has one of $d(t+2)$ colors.

Summing up, let us consider a color $c$ different from white and the $%
(t+2)^{d}+d(t+2)$ colors mentioned before. Since our splitting is fair, $d$%
-dimensional cells colored partially by $c$ can be divided into two families
having equal measure of $c$. Hence the measure satisfies equality of the
form:

\begin{multline*}
T(0)A+\sum \epsilon (0)_{i_{1},\ldots ,i_{d}}^{j}2^{d}m_{i_{1},\ldots
,i_{d}}^{j}+\sum S(0)_{t_{1},\ldots ,t_{s}}^{i}\alpha _{i}(t_{1},\ldots
,t_{s}) \\
-T(0)A-\sum \epsilon (0)_{i_{1},\ldots ,i_{d}}^{j}2^{d}m_{i_{1},\ldots
,i_{d}}^{j}-\sum S(0)_{t_{1},\ldots ,t_{s}}^{i}\alpha _{i}(t_{1},\ldots
,t_{s})=0
\end{multline*}%
where $T(0),T(1)\in \mathbb{N}$, $\epsilon (0)_{i_{1},\ldots
,i_{d}}^{j},\epsilon (1)_{i_{1},\ldots ,i_{d}}^{j}\in \{0,1\}$, $%
S(0),S(1)\in \mathbb{N}$, and not all $\epsilon (0)_{i_{1},\ldots ,i_{d}}^{j}
$ are equal to $0$. Note that for each color the numbers 
\begin{equation*}
T(0)A+\sum \epsilon (0)_{i_{1},\ldots ,i_{d}}^{j}2^{d}m_{i_{1},\ldots
,i_{d}}^{j}-T(0)A-\sum \epsilon (0)_{i_{1},\ldots
,i_{d}}^{j}2^{d}m_{i_{1},\ldots ,i_{d}}^{j}
\end{equation*}%
are independent over $\mathbb{Q}$. On the other hand, they can be generated
over $\mathbb{Q}$ by $\alpha _{i}(t_{1},\ldots ,t_{s})$ so they lie in $D(d)$%
-dimensional space. Since the number of remaining colors is greater than $%
D(d)$, we get a contradiction that ends the proof.
\end{proof}

\section{Open problems}

Let $f(t,d)$ denote the minimum number of colors needed for a coloring of $%
\mathbb{R}^{d}$ such that no cube has a fair splitting of size at most $t$.
Our main result asserts that $f(t,d)\leq
(t+4)^{d}-(t+3)^{d}+(t+2)^{d}-2^{d}+d(t+2)+3$. We expect naturally that this
bound is far from optimal, as even for $t=d=1$ it gives worst result than
that obtained in \cite{AlonGLM}.

\begin{problem}
Is it true that $f(t,d)\leq t+O(d)$?
\end{problem}

Turning into discrete case we get the following generalizations of the
problem of Erd\H{o}s. Let $g(t,d)$ denote the least number of colors needed
for a coloring of $\mathbb{Z}^{d}$ such that no cube has a fair splitting
using at most $t$ axes aligned cuts in total. So, by the result of Ker\"{a}%
nen we know that $g(1,1)=4$. Curiously we do not even know if $g(t,1)$ is
finite for every $t\geq 2$.

\begin{problem}
Determine $g(1,2)$ and $g(2,1)$.
\end{problem}

Finally let us formulate a natural discrete version of multidimensional
necklace splitting problem in the spirit of the theorem of de Longueville
and \v{Z}ivaljevi\'{c}. By a $d$\emph{-dimensional necklace} we mean a
discrete cube in $\mathbb{Z}^{d}$, that is a $d$-fold Cartesian product of
the set $\{1,2,\ldots ,n\}$ with itself. We assume that the necklace is
colored so that each color appears an even number of times. As before, in
the fair splitting problem we allow only axes aligned cuts.

\begin{problem}
What is the least number of axes aligned cuts needed for a fair splitting of 
$k$-colored $d$-dimensional necklace?
\end{problem}

Let $h(k,d)$ denote the number we asked for in the problem. It is not hard
to see that $h(k,d)\leq (2d-1)k$, as noticed by Laso\'{n} (personal
communication). For instance, if $d=2$ we may string the necklace as shown
in Fig. 1, and then apply one dimensional theorem. We get at most $k$ places
to cut the stringed necklace. However, to separate the resulting pieces
accordingly we need to use zigzags consisting of three line segments---one
horizontal, two vertical. Therefore for each cutting place of the string we
may need three orthogonal plane cuts. This proves the bound $h(k,2)\leq 3k$.
The argument for higher dimensions is analogous.

\begin{center}
\begin{equation*}
\FRAME{itbpF}{1.8844in}{2.1015in}{0in}{}{}{Figure}{\special{language
"Scientific Word";type "GRAPHIC";maintain-aspect-ratio TRUE;display
"USEDEF";valid_file "T";width 1.8844in;height 2.1015in;depth
0in;original-width 4.6804in;original-height 5.2226in;cropleft "0";croptop
"1";cropright "1";cropbottom "0";tempfilename
'MA4SCR01.bmp';tempfile-properties "XPR";}}
\end{equation*}

Fig. 1
\end{center}

The following construction due to Petecki (personal communication) shows
that the upper bound for $h(k,2)$ is close to the truth. Consider the set of
red points depicted in Fig. 2. It can be checked that any fair splitting of
this set must use at least three lines. So, taking $k-1$ copies of this set,
each in different color, and lying far one from another (so that there is no
vertical or horizonatal line crossing any two of the copies) one gets that $%
h(k,2)\geq 3(k-1)+1$. 
\begin{equation*}
\FRAME{itbpF}{1.8412in}{1.6155in}{0in}{}{}{Figure}{\special{language
"Scientific Word";type "GRAPHIC";maintain-aspect-ratio TRUE;display
"USEDEF";valid_file "T";width 1.8412in;height 1.6155in;depth
0in;original-width 4.056in;original-height 3.5552in;cropleft "0";croptop
"1";cropright "1";cropbottom "0";tempfilename
'MA54JR03.bmp';tempfile-properties "XPR";}}
\end{equation*}

\begin{center}
Fig. 2
\end{center}

\begin{acknowledgement}
Jaros\l aw Grytczuk acknowledges a support from Polish Ministry of Science
and Higher Education Grant (MNiSW) (N N206 257035).
\end{acknowledgement}

\end{document}